\newcommand{\bburl}[1]{\textcolor{blue}{\url{#1}}}
\def\QQ{\mathbb Q}
\def\F{\mathbb F}
\def\E{\mathcal{E}}
\def\OK{\mathcal{O}_K}
\newcommand{\p}{\mathfrak p}
\newcommand{\legsym}[2]{\left(\frac{#1}{#2}\right)}
\theoremstyle{plain}
\newtheorem{theorem}{Theorem}[section]
\newtheorem{cor}[theorem]{Corollary}
\newtheorem{prop}[theorem]{Proposition}
\theoremstyle{definition}
\newtheorem{remark}[theorem]{Remark}
\newtheorem{example}[theorem]{Example}
\numberwithin{equation}{section}
\numberwithin{theorem}{section}
\begin{document}

\title{Constructing families of moderate-rank elliptic curves over number fields}

\author[Mehrle]{David Mehrle}
  \email{\textcolor{blue}{\href{mailto:dmehrle@math.cornell.edu}{dmehrle@math.cornell.edu}}}

\author[Miller]{Steven J. Miller}
\email{\textcolor{blue}{\href{mailto:sjm1@williams.edu}{sjm1@williams.edu}},  \textcolor{blue}{\href{Steven.Miller.MC.96@aya.yale.edu}{Steven.Miller.MC.96@aya.yale.edu}}}
\address{Department of Mathematics and Statistics, Williams College, Williamstown, MA 01267}

\author[Reiter]{Tomer Reiter}
\email{\textcolor{blue}{\href{mailto:treiter@andrew.cmu.edu}{treiter@andrew.cmu.edu}}}

\author[Stahl]{Joseph Stahl}
\email{\textcolor{blue}{\href{mailto:jstahl@bu.edu}{jstahl@bu.edu}}}

\author[Yott]{Dylan Yott}
\email{\textcolor{blue}{\href{mailto:dyott@math.berkeley.edu}{dyott@math.berkeley.edu}}}

\subjclass[2010]{11G05 (primary), 11G20, 11G40, 14G10}

\keywords{Elliptic curves, rational elliptic surface, rank of the Mordell-Weil group, number fields, sums of Legendre symbols}

\date{\today}

\thanks{This work was supported by NSF grants DMS-1347804, DMS-1265673, Williams College, and the PROMYS program. The authors thank \'Alvaro Lozano-Robledo, Rob Pollack and Glenn Stevens for their insightful comments and support. Thanks also to the referee for their careful reading of an earlier version of this paper.}


\begin{abstract} 
We generalize a construction of families of moderate rank elliptic curves over $\QQ$ to number fields $K/\QQ$. The construction, originally due to Scott Arms, \'Alvaro Lozano-Robledo and Steven J. Miller, invokes a theorem of Rosen and Silverman to show that computing the rank of these curves can be done by controlling the average of the traces of Frobenius; the construction for number fields proceeds in essentially the same way. One novelty of this method is that we can construct families of moderate rank without having to explicitly determine points and calculating determinants of height matrices.
\end{abstract}

\maketitle

\section{Introduction}

If $E$ is an elliptic curve over $\QQ$, then the associated group of rational solutions, the Mordell-Weil group $E(\QQ)$, is finitely generated. The rank of this group is a very interesting and well-studied quantity in modern number theory; the famous Birch and Swinnerton-Dyer conjecture states that its rank equals the order of vanishing of the elliptic curve's $L$-function at the central point. We assume the reader is familiar with the basics of the subject; good references are \cite{Kn,Si1,Si2,SiTa}.

It is unknown if the rank of an elliptic curve over $\QQ$ can be arbitrarily large. It is an interesting and difficult problem to find examples or families of curves with large rank. To date the best known results are due to Elkies, who constructed an elliptic curve of rank at least 28 (or exactly 28 subject to the Generalized Riemann Hypothesis \cite{KSW}) and a family of elliptic curves of rank at least 18; see \cite{BMSW} for a survey of recent results on the distribution of ranks of curves in families, and conjectures for their behavior. 

Many of the constructions of high rank families of elliptic curves begin by forcing points to lie in the curves, and then calculating the associated height matrices to verify that they are linearly independent (see for example \cite{Mes1,Mes2,Na1}). We pursue an alternative approach introduced by Arms, Lozano-Robledo and Miller \cite{AL-RM}. Briefly, their strategy is to use a result of Rosen and Silverman \cite{RoSi}, which converts the problem of constructing families of elliptic curves with large rank to finding associated Legendre sums that are large. While in general these sums are intractable, for some carefully constructed families these can be determined in closed form, which allows us to determine the rank of the families \emph{without} having to list points and compute height matrices. Our main result is to generalize the work in \cite{AL-RM} from elliptic curves over $\QQ$ to elliptic curves over number fields. Specifically, we show the following. 

\begin{theorem}\label{thm:main} Let $K$ be a number field. Then there exists an elliptic curve $\mathcal E$ over $K(T)$ with $j(\mathcal E) \not \in \QQ(T)$ such that the rank of $\mathcal E$ over $K(T)$ is exactly 6.
\end{theorem}

By specializing to $T = t$ for some $t \in K$, we obtain curves $\mathcal E_t$ over $K$ from the curve $\mathcal E$ over $K(T)$. Silverman's specialization theorem \cite[Theorem 11.4]{Si2} tells us that for all but finitely many $t \in T$, the rank can only possibly increase.

\begin{cor}
  Let $K$ be a number field. There are infinitely many elliptic curves over $K$ with rank at least 6. 
\end{cor}

\begin{remark}
  Arms et al.\ \cite{AL-RM} construct infinitely many elliptic curves over $\QQ$ with rank at least 6. By base-extending these curves to $K$, we may trivially obtain infinitely many elliptic curves over $K$ with rank at least $6$. Our contribution is to construct curves that are defined over $K$ but \emph{not} defined over $\QQ$; this is evident because the $j$-invariant of curves we construct in Theorem \ref{thm:main} lies in $K(T)$ but not in $\QQ(T)$.
\end{remark}

\section{The Construction}

Let $K$ be a number field and $\OK$ its ring of integers. Let $\mathcal{E}$ be the elliptic curve over $K(T)$ defined by
\[
  \label{eq:ellipticcurve}
  \mathcal{E} \colon \hspace{1em} y^2 + a_1(T)xy + a_3(T)y \ =\ x^3 + a_2(T)x^2 + a_4(T)x  + a_6(T),
\]
where $a_i(T) \in \OK(T)$. By Silverman's specialization theorem \cite[Theorem 11.4]{Si2}, for all but finitely many $t \in \OK$ the Mordell-Weil rank of the fiber $\mathcal{E}_t$ over $K$ is at least that of the rank of $\mathcal{E}$ over $\OK(T)$. Therefore, if we can compute the rank of $\mathcal{E}$, we have a family of infinitely many curves $\mathcal{E}_t$ over $K$ with at least the rank of $\mathcal{E}$.

To that end, for $\mathcal{E}$ as above and $\p$ a prime of good reduction in $\OK$ (we do not consider the bad primes here), we define the average
\begin{equation}
  \label{eq:avgnumpoints}
  A_{\mathcal{E}}(\p)\ :=\ \frac{1}{N(\p)} \sum_{t \in \OK / \p} a_{t}(\p),
\end{equation}
where $N(\p) = | \OK / \p |$ and $a_t(\p) = N(\p) + 1 - \#\mathcal{E}_t(\OK / \p)$. Nagao \cite{Na2} conjectured that these sums are related to the rank of the family of elliptic curves. Rosen and Silverman proved this conjecture when $\mathcal{E}$ is a rational elliptic surface \cite{RoSi}. Specifically, whenever Tate's conjecture holds (which is known for $K3$ surfaces over certain fields \cite{Sr}) we have
\begin{equation}
  \label{eq:rosensilvermanthm}
  \lim_{X \to \infty} \frac{1}{X} \sum_{\p \colon N(\p) \leq X} -A_\mathcal{E}(\p) \log(N(\p))\ =\ \text{rank } \mathcal{E}(K(T)).
\end{equation}

Below we study certain carefully chosen families where we are able to prove that $A_\mathcal{E}(\p) = -6$ for almost all primes $\p$, thus proving these families have rank 6. To calculate the limit \eqref{eq:rosensilvermanthm}, we appeal to the Landau Prime Ideal Theorem, a generalization of the Prime Number Theorem.

\begin{theorem}[Landau Prime Ideal Theorem \cite{Lan}] We have
  \[
    \sum_{\p \colon N(\p) \leq X} \log(N(\p)) \ \sim\ X. 
  \]
\end{theorem}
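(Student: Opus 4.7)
The plan is to adapt the classical analytic proof of the Prime Number Theorem, with the Riemann zeta function replaced by the Dedekind zeta function
$$\zeta_K(s)\ :=\ \sum_{\fa \subseteq \OK} N(\fa)^{-s}\ =\ \prod_{\p} \left(1 - N(\p)^{-s}\right)^{-1},$$
where the sum runs over nonzero ideals and the product over prime ideals of $\OK$. Taking the logarithmic derivative gives
$$-\frac{\zeta_K'(s)}{\zeta_K(s)}\ =\ \sum_{\fa} \Lambda_K(\fa)\, N(\fa)^{-s}, \qquad \Lambda_K(\p^k) := \log N(\p),$$
with $\Lambda_K(\fa) = 0$ unless $\fa$ is a prime power. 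Setting $\psi_K(X) := \sum_{N(\fa) \leq X} \Lambda_K(\fa)$, the theorem will follow if we show $\psi_K(X) \sim X$, since the contribution to $\psi_K(X)$ from prime ideal powers $\p^k$ with $k \geq 2$ is at most $O(\sqrt{X}\log X)$ (there are at most $[K:\QQ]$ primes of $\OK$ over each rational prime, so counting by $N(\p)^k \leq X$ yields the same bound as in the rational case).

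First, I would invoke the analytic continuation of $\zeta_K(s)$ to a meromorphic function on $\CC$ with a unique simple pole at $s=1$; this is standard via Hecke's integral representation using theta series, or via the class number formula expressing the residue at $s=1$. Next, the key analytic input is the \emph{nonvanishing of $\zeta_K(s)$ on the line $\operatorname{Re}(s) = 1$}. This I would establish by the Hadamard--de la Vall\'ee Poussin trick: from the Euler product one deduces that for $\sigma > 1$ and $t \in \RR$,
$$\log\left|\zeta_K(\sigma)^3 \zeta_K(\sigma + it)^4 \zeta_K(\sigma + 2it)\right|\ =\ \sum_{\p, k} \frac{3 + 4\cos(kt\log N(\p)) + \cos(2kt\log N(\p))}{k\, N(\p)^{k\sigma}},$$
which is nonnegative by the identity $3 + 4\cos\theta + \cos 2\theta = 2(1+\cos\theta)^2 \geq 0$. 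Hence $\zeta_K(\sigma)^3 |\zeta_K(\sigma+it)|^4 |\zeta_K(\sigma+2it)| \geq 1$, and a hypothetical zero of $\zeta_K$ at $1+it_0$ with $t_0 \neq 0$ would contradict the simple pole at $s = 1$ as $\sigma \to 1^+$.

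With these two ingredients in hand, the Dirichlet series $-\zeta_K'/\zeta_K$ is meromorphic on an open set containing $\operatorname{Re}(s) \geq 1$, its only singularity there is a simple pole of residue $1$ at $s=1$, and the associated coefficients $\Lambda_K(\fa)$ are nonnegative. The hypotheses of the Wiener--Ikehara Tauberian theorem are therefore satisfied, yielding $\psi_K(X) \sim X$, and the reduction above then gives the stated asymptotic. The main obstacle is the nonvanishing on $\operatorname{Re}(s) = 1$; the meromorphic continuation and the Tauberian step are both formal consequences of well-developed machinery, but the nonvanishing is the one place where genuine analytic content enters and where the Euler product must be exploited nontrivially.
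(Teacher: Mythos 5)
Your sketch is correct in outline, but there is nothing in the paper to compare it against: the paper does not prove this statement at all, it simply quotes the Prime Ideal Theorem from Landau's 1903 paper \cite{Lan} and uses it as a black box. Taking the statement on its own, your route is the standard modern one and all the main ingredients are in place: reduce $\sum_{N(\p)\le X}\log N(\p)$ to $\psi_K(X)=\sum_{N(\fa)\le X}\Lambda_K(\fa)$ (the prime-power correction is indeed $O(\sqrt{X}\log X)$, using that at most $[K:\QQ]$ primes of $\OK$ lie over each rational prime); meromorphic continuation of $\zeta_K$ with a simple pole at $s=1$ (Hecke); nonvanishing on $\operatorname{Re}(s)=1$ via the $3+4\cos\theta+\cos 2\theta=2(1+\cos\theta)^2\ge 0$ inequality applied to $\log\bigl|\zeta_K(\sigma)^3\zeta_K(\sigma+it)^4\zeta_K(\sigma+2it)\bigr|$; and then Wiener--Ikehara applied to $-\zeta_K'/\zeta_K$, whose coefficients are nonnegative and whose only singularity on the closed half-plane is a simple pole of residue $1$ at $s=1$. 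You correctly identify the nonvanishing step as the one point of genuine analytic content. It is worth noting that this cannot literally be Landau's proof, since the Wiener--Ikehara theorem postdates 1903; Landau argued in the style of de la Vall\'ee Poussin, establishing a zero-free region for $\zeta_K$ and using contour-integration estimates, which yields the asymptotic \emph{with} an error term, whereas the Tauberian route you propose is shorter but gives only the bare asymptotic $\psi_K(X)\sim X$ --- which is all the paper needs.
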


Assuming we can produce $\mathcal{E}$ such that $A_{\mathcal{E}} (\p) = -6$ for almost all $\p$, then combining the Landau Prime Ideal Theorem with equation \eqref{eq:rosensilvermanthm} it follows that
\[ 
  \text{rank } \mathcal{E}(K(T))\ =\ 6, 
\]
which completes the proof of Theorem \ref{thm:main}.

So it remains to show that we can produce an $\mathcal{E}$ such that $A_{\mathcal{E}}(\p) = -6$. As in Equation 2.2 of \cite{AL-RM}, define 
\begin{eqnarray}
  \label{eq:curvedef}
  y^2\ =\ f(x,T) &=& x^3 T^2 + 2g(x)T - h(x) \notag\\
            g(x) &=& x^3 + ax^2 + bx + c \label{eq:curveformula}\\
            h(x) &=& (A-1)x^3 + Bx^2 + Cx + D \notag\\
          D_T(x) &=& g(x)^2 + x^3h(x).\notag
\end{eqnarray}

Notice that $D_T(x)$ is one-quarter of the discriminant of $f(x,T)$, considered as a quadratic polynomial in $T$. When we specialize to a particular $t \in \OK$, we write $D_t(x)$ for the one-quarter of the discriminant of $f(x,t)$. As a degree six polynomial in $x$, write $r_1, r_2, \ldots, r_6$ for the roots of $D_T(x)$. We will see that the number of distint, nonzero roots of $D_T(x)$ control the rank of the curve. 

In order to show our claim for the elliptic curve $y^2 = f(x,T)$, we must pick six distinct, nonzero roots of $D_T(x)$ which are squares in $\OK$.  We also need the analogue of equation 2.1 from \cite{AL-RM} for number fields, which can be stated as follows. For $a$, $b$ both not zero mod $\p$ and $N(\p) > 2$, then for $t \in \OK$
\begin{equation}
  \label{eq:legendresum}
  \sum_{t \in \OK / \p} \left( \frac{at^2 + bt + c}{\p} \right)\ =\
  \begin{cases}
    (N(\p) - 1) \left( \frac{a}{\p} \right) & \text{if } (b^2 - 4ac) \in \p \\
    - \left( \frac{a}{\p} \right) & \text{otherwise.}
  \end{cases} 
\end{equation}

Note that \eqref{eq:legendresum} is already demonstrated when $\OK/\p = \F_p$ is a finite field of prime order $p$ in Lemma A.2 of  \cite{AL-RM} (they give two proofs; the result also appears in \cite{BEW}). Therefore, it suffices to show \eqref{eq:legendresum} when $\OK/\p = \F_q$ is a finite field of order $q = p^r$ for $p$ prime and $r > 1$; we do so in Proposition \ref{importantlemma} in the next section. 

Write $\F_q = \OK/\p$ for the residue field of $\OK$ at $\p$.
 We have for the fiber of the elliptic surface $y^2 = f(x,T)$ at $T = t$
\[
  a_t(\p) = - \sum_{x \in \OK/\p} \left(\frac{f(x,t)}{\p}\right) 
          = - \sum_{x \in \OK/\p} \left(\frac{x^3 t^2 + 2g(x)t - h(x)}{\p}\right), 
\]
where $\left(\frac \cdot \p\right)$ is the Legendre symbol of the residue field $\OK/\p$. 

Now we study $- N(\p) A_\E(p) = \sum_{t \in \OK/\p} a_t(\p) = \sum_{x, t \in \OK/\p} \left(\frac{f(x,t)}{\p}\right)$ to calculate $A_\E(\p)$, as in equation \eqref{eq:avgnumpoints}. When $x \in \p$, the $t$-sum vanishes unless $c \in \p$ --- it is just $\sum_{t \in \OK/\p} \left(\frac{2ct - D}{\p}\right)$. 

Assume now $x \not \in \p$. Then by \eqref{eq:legendresum}, we have 
\[
  \sum_{t \in \OK/\p} \left(\frac{x^3 t^2 + 2g(x) t - h(x)}{\p}\right) = 
    \begin{cases}
      (N(\p) - 1) \left(\frac{x^3}{\p}\right) & \text{if } D_t(x) \in \p \\
      - \left( \frac{x^3}{\p} \right)         & \text{otherwise.} 
    \end{cases}
\]
If the roots $r_1, r_2, \ldots, r_6$ are squares in $\OK$, then their contribution to the rank is $(N(\p) - 1)\left(\frac{r_i^3}{\p}\right)$. If the $r_i$ are not squares, then $\left(\frac{r_i}{\p}\right)$ will be $1$ for half of the primes of $\OK$ and $-1$ for the other half, and therefore yield no net contribution to the rank. 

So assume that we may choose coefficients $a,b,c,A,B,C,D$ such that $D_T(x)$ has six distinct, non-zero roots $r_i \in \OK$, each of which is a square. Write $r_i = \rho_i^2$ for $i = 1, \ldots, 6$. Then 
\begin{align*}
  - N(\p) A_\E(\p) & = \sum_{\substack{x \in \OK/\p \\ t \in \OK/\p}} \left(\frac{f(x,t)}{\p}\right)              =  \sum_{\substack{x \in \OK/\p \\ t \in \OK/\p}} \left(\frac{x^3t^2 + 2g(x) t - h(x)}{\p}\right) \notag \\
  & = \sum_{\substack{x \colon D_t(x) \in \p\\ t \in \OK/\p}} \left(\frac{f(x,t)}{\p}\right) + \sum_{\substack{x \colon D_t(x) \not \in \p \\ t \in \OK/\p}} \left(\frac{f(x,t)}{\p}\right) \\
  & =  6(N(\p)-1) - \sum_{x \colon D_t(x) \not \in \p} \left(\frac{x^3}{\p}\right) \notag\\
  & =  6(N(\p)-1) + 6 = 6N(\p)\notag
\end{align*}
Hence, $- N(\p)A_{\mathcal{E}}(\p) = 6 N(\p)$. Therefore $A_{\mathcal{E}}(\p) = -6$, completing the proof of Theorem \ref{thm:main}.

Now we must find $a, b, c, A, B, C, D \in \OK$ such that $D_T(x)$ has six distinct, nonzero roots $r_i = \rho_i^2$: 
\begin{align}
  D_T(x) &= g(x)^2 + x^3 h(x) \notag \\
  & = Ax^6 + (B + 2a)x^5 + (C + a^2 + 2b) x^4 + (D + 2ab + 2c) x^3 \notag\\
  & \hspace*{1cm} + (2ac + b^2) x^2 + (2 bc) x + c^2 \label{eq:discriminantcoefficients} \\
  & = A(x^6 + R_5x^5 + R_4x^4 + R_3x^3 + R_2 x^2 + R_1x + R_0) \notag\\
  & = A (x - \rho_1^2)(x - \rho_2^2)(x - \rho_3^2)(x - \rho_4^2)(x - \rho_5^2)(x - \rho_6^2) \notag
\end{align}
In practice, we will choose roots $\rho_i^2$ and then determine the polynomial $D_T(x)$; and from it, the coefficients $a, b, c, A, B, C, D$. Note that in the above we are free to choose $B, C, D$, so matching coefficients for the $x^5, x^4$ and $x^3$ terms do not add any additional constraints. So we must simultaneously solve the following three equations in $\OK$:
\begin{align*}
  2ac + b^2 &= R_2 A, \\
  2bc       &= R_1A, \\
  c^2       &= R_0 A. 
\end{align*}
So long as this system of Diophantine equations is solvable in $\OK$, we may construct such an elliptic surface. In section \ref{sec:examples} below, we provide some examples of elliptic surfaces over number fields.


\section{Quadratic Legendre Sums}

The following proposition on quadratic Legendre sums in finite fields is the generalization of Lemma A.1 from \cite{AL-RM} to number fields (see also \cite{BEW}). Let $q=p^r$ be an odd prime power, and assume $\OK / \p = \F_q$ is a finite field with $q$ elements. Let $\left( \frac{\cdot}{q} \right)$ denote the $\F_q$-Legendre symbol which indicates whether or not an element of $\F_q$ is a square. 

\begin{prop}
  \label{importantlemma} 
  If $a \in \OK$ is not zero modulo $\p$, then
  \[
    \sum_{t \in \F_q} \left(\frac{at^2 + bt + c}{q}\right) = 
    \begin{cases}
      (q - 1) \left(\frac a q\right) & \text{if } b^2 - 4ac \equiv 0 \bmod p\\
      - \left(\frac a q \right) & \text{otherwise.}
    \end{cases}
  \]
\end{prop}

\begin{proof}
  The first case is straightforward, as if $b^2-4ac\equiv0 \bmod \p$, then $at^2+bt+c = a(t-t')^2$ for some $t' \in \F_q$, and each of the terms in the sum except $t'$ contribute $\left(\frac{a}{q}\right)$, and $t'$ contributes $0$. 
  
  For the other case, when $b^2 - 4ac \not \equiv 0 \bmod \p$, we first reinterpret the sum as counting points on the conic $C: s^2 = at^2+ bt + c$ in the following way:
\[
    \#C(\F_q)\ =\ \sum_{t \in \F_q} \left(1+ \left(\frac{at^2+bt+c}{q} \right)\right) \ =\ q + S. 
\]
  Here $S$ is the sum of interest. It is well-known that a nondegenerate conic of this particular form always has a rational point over $\F_q$ when $q$ is a power of an odd prime \cite{E}, \cite[Theorem 3.4]{Su}. From this, we may parameterize all rational points using some line that does not meet the original rational point. This gives at most $q+1$ points on the curve. However, this parametrization introduces a denominator that is possibly quadratic in $t$, which means at most $2$ rational points on the line might not correspond to rational points on the curve. Thus we have
  \[
    q-1\ \leq\ \#C(\F_q) \ \leq\ q+1,
  \]
  which gives
  \[ 
    -1 \ \leq\ S \ \leq\ 1. 
  \]

To determine the value of $S$, we compute it is modulo $p$. By Euler's criterion in finite fields:
\[
  S \ \equiv\ \sum_{t \in \F_q} (at^2 + bt+ c)^{\frac{q-1}{2}}\ \equiv\ \sum_{t \in \F_q} \left(\frac{a}{q}\right)t^{q-1} + r(t) \ \equiv\ -\legsym{a}{q} + \sum_i r_i \sum_{t \in \F_q} t^i,  
\]
where $r(t)$ is a polynomial of degree $<q-1$, and $r_i$ are the roots of $D_t(\p)$, as above. Each of the inner sums $\sum_{t \in \F_q} t^i$ is $0$, since $i<q-1$, so one of the terms is nonzero, but the sum is stable under multiplication by any of its summands. Thus $S \equiv -\legsym{a}{q} \pmod{p}$. Since $-1 \leq S \leq 1$, we have $S = -\legsym{a}{q}$, as desired.
\end{proof}

\section{Examples}
\label{sec:examples}

Let $K$ be an arbitrary number field. Theorem \ref{thm:main} claims that we may produce elliptic curves of rank $6$ over $K$; we provide a recipe to produce these curves over number fields in this section. 

 As in \cite[Section 2.2]{AL-RM}, we choose $A = 64 R_0^3$ for simplicity. This choice is convenient, because it allows us to solve 
\[\begin{array}{llll}
 c^2 &= 64 R_0^3 & \Rightarrow c &= 8 R_0^3 \\
 2bc &= 64 R_0^3 R_1 & \Rightarrow b &= 4 R_0 R_1\\
 2ac + b^2 &= 64 R_0^3 R_2 & \Rightarrow a & = 4 R_0 R_2 - R_1^2.
\end{array}\]
Additionally, we may solve for $B, C, D$ in terms of $R_0, \ldots, R_5$. Altogether, we have
\begin{align}
a & = 4 R_0 R_2 - R_1^2\notag\\
b & = 4 R_0 R_1\notag\\
c & = 8 R_0^2\notag\\
A & = 64 R_0^3 \label{eq:solvedcoeffs}\\
B & = A R_5 - 2a\notag\\
C & = A R_4 - a^2 - 2b\notag\\
D & = A R_3 - 2ab - 2c\notag
\end{align}
The above determines the coefficients of the elliptic curve defined by the equations \eqref{eq:curvedef} in terms of the roots $r_i = \rho_i^2$ of the discriminant $D_T(x)$, as in \eqref{eq:discriminantcoefficients}. 

Expanding the first line of \eqref{eq:curvedef}, we arrive at the following equation for the elliptic curve. 
\begin{equation}
\label{eq:expandedcurveequation}
y^2 = x^3 + (2aT - B)x^2 + (2bT - C)(T^2 + 2T - A + 1)x + (2cT - D)(T^2 + 2T - A + 1)^2
\end{equation}
To produce curves over $K$, we may use the following recipe:
\begin{itemize}
\setlength{\itemsep}{1em}
\item choose six squares $r_1 = \rho_1^2, \ldots, r_6 = \rho_6^2$ in $K$ to be the roots of $D_T(x)$; 
\item solve for $R_0, \ldots, R_5$ as the coefficients of the degree-six polynomial $D_T(x)$; 
\item use \eqref{eq:solvedcoeffs} to find $a, b, c, A, B, C, D$; 
\item plug these values into \eqref{eq:expandedcurveequation} to determine the equation for the elliptic curve. 
\item Specialize at $T = t$ for some $t \in K$. For generic choices of $t$, this specializes to an elliptic curve of rank at least $6$, by Silverman's specialization theorem \cite[Theorem 11.4]{Si2}. 
\end{itemize}
We carry out this procedure for a few choices of number fields below.  

\begin{example}
Let $K = \QQ$. In \cite[Theorem 2.1]{AL-RM}, a rank 6 elliptic surface $\E$ over $\QQ(T)$ with equation as in \eqref{eq:expandedcurveequation} and $r_\ell = \ell^2$, for $\ell = 1, 2, \ldots, 6$. 
\[\begin{array}{rlrl}
a &= 166601111104, & A &= 8916100448256000000,\\
b &= -1603174809600, \hspace*{1cm}& B &= -811365140824616222208,\\
c &= 2149908480000, & C &= 26497490347321493520384,\\
& & D &= -343107594345448813363200.
\end{array}\]

\end{example}

\begin{example}
Choose $K = \QQ(i)$, $i = \sqrt{-1}$. Then with the choices $\rho_1 = 1 + i$, $\rho_s = s$ for $s = 2, 3, \ldots, 6$; $r_\ell = \rho_\ell^2$, we find 
\begin{align*}
a & = - 353892105216+ 528220569600i \\
b & =  2112882278400-2149908480000i \\
c & = -8599633920000\\
A & = -71328803586048000000i\\
B & =  153634690402938169196544+ 380285771115321439027200i \\
C & =  166616532655598905196544 + 166085373946419295027200i \\
D & = - 1191348658308947587891200-789381960170093936640000i 
\end{align*}
Via \eqref{eq:expandedcurveequation}, this determines a rank 6 elliptic curve $\mathcal E$ over $K(T) = \QQ(i)(T)$. The $j$-invariant of this curve is $j(\mathcal E) = p(T)/q(T)$, where $p(T)$ and $q(T)$ are degree $9$ and $10$ polynomials in $T$, respectively. The leading coefficient of $p(T)$ is 
\begin{align*}
 p_9 &= 17575652563096624654081015917110624256000000 \\
 & \hspace{1cm} + 16682776400034638205353357277029990400000000i. 
\end{align*}
In particular, $j(\mathcal E) \in K(T)$, but $j(\E) \not \in \QQ(T)$. 
\end{example}

\begin{example}
Choose $K = \QQ(\zeta_5)$, where $\zeta_5$ is a fifth root of unity. Then with the choices $r_\ell = \rho_\ell^2$, 
\[ \rho_1 = \zeta_5, \ \ \rho_2 = \zeta_5^2, \ \ \rho_3 = \zeta_5^3, \ \ \rho_4 = 1 + \zeta_5, \ \ \rho_5 = 1 + \zeta_5^2, \ \ \rho_6 = 1 + \zeta_5^3. \]
the coefficients are 
\begin{align*}
a & = 55\zeta_5^3 - 8\zeta_5^2 + 41\zeta_5 + 25 \\
b & = -12\zeta_5^3 - 40\zeta_5^2 + 20\zeta_5 - 48\\
c & = -24\zeta_5^2 + 16\zeta_5 - 24\\
A & = -832\zeta_5^3 - 320\zeta_5^2 - 320\zeta_5 - 832\\
B & = 2312\zeta_5^3 + 3693\zeta_5^2 - 861\zeta_5 + 5117\\
C & = -2040\zeta_5^3 + 1837\zeta_5^2 - 2397\zeta_5 + 573\\
D & = 2440\zeta_5^3 + 152\zeta_5^2 + 1408\zeta_5 + 1664
\end{align*}
This determines a rank 6 elliptic curve $\mathcal E$ over $K(T) = \QQ(\zeta_5)(T)$ via \eqref{eq:expandedcurveequation}. The $j$-invariant for this elliptic curve is 
$j(\mathcal E) = p(T)/q(T)$
where $p(T)$ and $q(T)$ are degree 9 and 10 polynomials in $T$, respectively. The leading coefficient of $p(T)$ is 
\begin{align*}
p_9 &=-1203674209337006199645159424 \zeta_{5}^{3} + 470942041084292914570780672 \zeta_{5}^{2} \\
&\hspace{2cm} - 1034969760873268271839698944 \zeta_{5} - 272969531667632951696109568, 
\end{align*}
witnessing the fact that $j(\E) \in K(T)$, but $j(\mathcal E) \not \in \QQ(T)$. 
\end{example}


\end{document}